\def\marker{\>\hbox{${\vcenter{\vbox{
    \hrule height 0.4pt\hbox{\vrule width 0.4pt height 6pt
    \kern6pt\vrule width 0.4pt}\hrule height 0.4pt}}}$}\>}
\newtheorem{theorem}{Theorem} 
\newtheorem{theorem*}{Theorem} 
\newtheorem{lemma}[theorem]{Lemma}
\theoremstyle{definition}
\newtheorem{question}{Question}
\newtheorem{problem}{Problem}
\theoremstyle{remark}
\title{Characterization of Graphs with Villainy  2}
\author{Sogol Jahanbekam \footnotemark[1] and Meng-Ru Lin \footnotemark[2]}
\date{}
\begin{document}

\maketitle

\begin{abstract}

Let $f$ be an optimal proper coloring of a graph $G$ and let $c$ be a coloring of the vertices of  $G$ obtained by permuting the colors on vertices in the proper coloring $f$.  The villainy of $c$, written $B(c)$, is the minimum number of vertices that must be recolored to obtain a proper coloring of $G$ with the additional condition that the number of times each color is used does not change.  The villainy of $G$ is defined as $B(G)=max_{c}B(c)$, over all optimal proper colorings of $G$. In this paper, we characterize  graphs $G$ with $B(G)=2$.

\noindent{\bf Keywords: proper coloring,  	structural characterization of families of graphs, 	graph labelling, 05C15, 05C75, 05C78}
\end{abstract}

\renewcommand{\thefootnote}{\fnsymbol{footnote}}
\footnotetext[1]{
Department of Mathematics and Statistics, San Jose State University, San Jose, CA; {\tt sogol.jahanbekam@sjsu.edu.}\\{Research was supported in part by NSF grant CMMI-1727743.}
}

\footnotetext[2]{
 Department of Mathematics and Statistics, San Jose State University, San Jose, CA;
{\tt meng-ru.lin@sjsu.edu.} }
\renewcommand{\thefootnote}{\arabic{footnote}}

\baselineskip18pt

\section{Introduction}

A coloring over the vertices of a graph $G$ is \textit{proper} if adjacent vertices receive different colors.
 A \textit{proper $k$-coloring} of a graph $G$ is a coloring of the vertices of $G$ using $k$ colors. The \textit{chromatic number} of a graph $G$, denoted $\chi(G)$, is the smallest integer $k$ such that a proper $k$-coloring of $G$ exists.

Let $G$ be a graph with $\chi(G)=k$ and let 
  $c$ be a coloring of the vertices of  $G$ that is a permutation of a proper $k$-coloring of $G$. The \textit{weak  villainy} of $c$, denoted $B_w(c)$,  is the minimum number of vertices that must be recolored with the same set of colors as $c$ in order to obtain a proper coloring. The \textit{villainy} of $c$, denoted $B(c)$, is the minimum number of vertices that must be recolored with the same set of colors as $c$ with the additional condition that each color must appear exactly as many times as it does in $c$. The \textit{villainy} or \textit{weak villainy} of the graph $G$ are defined as:
 
 $$B(G)=\max\{B(c):\ c\in P(G)\}$$

  $$B_w(G)=\max\{B_w(c):\ c\in P(G)\},$$

respectively, where $P(G)$ is the set of all proper colorings of $G$ with $\chi(G)$ colors.

Note that finding the chromatic number of graphs is an NP-complete problem \cite{K}.
Studying villainy and weak villainy of graphs can lead to new algorithms for graph coloring problems. For example, if an efficient upper-bound for the villainy of graphs is found, then it could help us to navigate from a random coloring on $V(G)$ to a proper coloring of $G$ efficiently or argue  that such proper coloring does not exist. Villainy of graphs is also closely related to entropy. In information theory, the \textit{entropy} (also known as Shannon entrophy)  of a random variable is the average level of ``surprise"  or ``uncertainty" inherent in the variable's possible outcomes (see  \cite{S}). Considering a random permutation of a proper coloring of a graph $G$, the villainy of $G$ would represent the number of surprises we see in the permutation.

 Clark et al. \cite{C} first introduced the topic. Among other results, they proved that $B(G)=0$ if and only if $B_w(G)=0$ if and only if  $G$ is a complete graph or an empy graph if and only if $B(G)\leq 1$. They also showed that $B_w(G)=1$ if and only if either $|V(G)|\geq 3$ and $E(G)$ consists of one edge or two incident edges, or
$|V(G)|\geq 4$ and $G$ consists of a complete graph plus an isolated vertex or a pendant vertex.

  They also determined the villainy and weak villainy of connected bipartite graphs:
 
 \begin{theorem}[\cite{C}]\label{bipartite}
Suppose $G$ is a connected bipartite graph of order $n$ with $n\geq 3$. Let $X$ and $Y$ be bipartite sets of $G$ with $x=|X|$ and $n-x=|Y|$, where $|X|\leq |Y|$.  If $x<(n-x)$, then $B(G)=2x$. Otherwise, $B(G)=2\lceil \frac{n}{4}\rceil$. We also have $B_w(G)=min(2x, 2\lfloor\frac{n}{4}\rfloor)$ unless $n\equiv 3$ (mod 4), in which case $B_w(G)=min(2x, 2\lfloor\frac{n}{4}\rfloor+1)$.
 \end{theorem}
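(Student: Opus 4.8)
The plan is to exploit that a connected bipartite graph on $n \ge 3$ vertices has an edge, so $\chi(G) = 2$, and --- being connected --- a \emph{unique} bipartition $\{X, Y\}$; hence $G$ has exactly two proper $2$-colorings: $f_X$, which puts color $1$ on $X$ and color $2$ on $Y$, and $f_Y$, with the colors swapped. Any coloring $c$ obtained by permuting the colors of a proper $2$-coloring uses the two colors with multiplicities $\{x, n-x\}$; renaming colors (a symmetry that changes neither $B(c)$ nor $B_w(c)$) we may assume the color-$1$ class $S := c^{-1}(1)$ has $|S| = x$. Since two $2$-colorings with color-$1$ classes $S$ and $T$ differ on exactly $|S \triangle T|$ vertices, we get $B_w(c) = \min\{|S\triangle X|,\,|S\triangle Y|\}$ and $B(c) = \min\{|S\triangle T| : T \in \{X,Y\},\ |T| = |S|\}$. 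I would open with the two identities $|S\triangle X| + |S\triangle Y| = n$ (because $X \sqcup Y = V$) and $|S\triangle X| = 2\,|S\cap Y|$ (because $|S| = |X| = x$); thus, as $S$ runs over the $x$-subsets of $V$, the value $|S\triangle X|$ runs over the even integers $0, 2, \dots, 2x$.

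For $B(G)$: if $x < n-x$, the only proper coloring with color-$1$ class of size $x$ is $f_X$, so $B(c) = |S\triangle X| = 2\,|S\cap Y|$, maximized at $2x$ by taking $S \subseteq Y$ (possible as $|Y| \ge x$); hence $B(G) = 2x$. If $x = n-x$, both $f_X$ and $f_Y$ are admissible, so $B(c) = \min\{d,\,n-d\}$ with $d = |S\triangle X|$ ranging over even values in $\{0,2,\dots,n\}$, and $B(G)$ is the maximum of $\min\{d,n-d\}$ over those $d$, which a short analysis according to $n \bmod 4$ evaluates.

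For $B_w(G)$: the target need only be proper, so $c'\in\{f_X, f_Y\}$ in every case and $B_w(c) = \min\{2k,\,n-2k\}$ with $k = |S\cap Y|\in\{0,1,\dots,x\}$; hence $B_w(G) = \max_{0 \le k \le x}\min\{2k,\,n-2k\}$. When $x \le n/4$ the maximizer is $k = x$, giving $2x$. When $x > n/4$ the maximizer is the admissible $k$ whose $2k$ lies closest to $n/2$, which gives $2\lfloor n/4\rfloor$ in general but $2\lfloor n/4\rfloor + 1$ when $n \equiv 3 \pmod 4$, since there the nearest \emph{even} value to $n/2$ is $(n+1)/2$. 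Combining the two regimes gives $\min\{2x,\,2\lfloor n/4\rfloor\}$, respectively $\min\{2x,\,2\lfloor n/4\rfloor + 1\}$.

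The identities and the regime-by-regime comparisons are routine; the part needing real care --- and where I would slow down --- is the parity bookkeeping: $|S\triangle X|$ is always even, whereas the ``ideal'' target distance $n/2$ need not be, and this mismatch is exactly what governs the balanced case of $B(G)$ and forces the $n \equiv 3 \pmod 4$ exception for $B_w(G)$. I would also check that every lower-bound configuration is genuinely realizable: any $x$-subset of $V$ is $\sigma(X)$ for some permutation $\sigma$ of $V$, so every candidate class $S$ does arise from permuting a proper coloring.
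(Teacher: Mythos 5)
The paper does not actually prove this theorem---it is quoted from Clark et al.\ \cite{C}---so there is no in-paper argument to compare against. Your reduction is the right (and essentially forced) one: a connected bipartite graph with an edge has a unique bipartition, hence exactly two proper $2$-colorings, and both $B(c)$ and $B_w(c)$ become minimum Hamming distances from $c$ to the admissible targets. Your identities, the realizability remark, the unbalanced case of $B$, and the full $B_w$ analysis (including the $n\equiv 3\pmod 4$ parity exception) are all correct. The problem is the one computation you defer. In the balanced case $x=n-x$ you correctly reduce $B(G)$ to $\max\{\min(d,\,n-d)\}$ over even $d\in\{0,2,\dots,n\}$ and then assert that a short analysis according to $n\bmod 4$ ``evaluates'' it to the stated $2\lceil n/4\rceil$. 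It does not: for $n\equiv 2\pmod 4$ the ideal value $d=n/2$ is odd, the best even choices are $d=n/2\pm 1$, and the maximum is $n/2-1=2\lfloor n/4\rfloor$, whereas $2\lceil n/4\rceil=n/2+1$. Concretely, for $K_{3,3}$ your own formula gives $B=2$ (every $3$-set $S$ is within Hamming distance $2$ of $X$ or of $Y$), while the statement as printed demands $B=4$.

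So your method, carried out honestly, proves $B(G)=2\lfloor n/4\rfloor$ in the balanced case, not $2\lceil n/4\rceil$; you cannot reach the printed formula for $n\equiv 2\pmod 4$ because it is false there. I am confident the ceiling is a typo for a floor: the rest of the paper silently uses the floor version (Theorem \ref{main} lists $6$-vertex connected bipartite graphs with parts of size $3$ among the graphs of villainy $2$, and its proof extracts exactly the connected list of stars, $C_4$, $P_4$, and balanced $6$-vertex bipartite graphs from Theorem \ref{bipartite}, which is only consistent with $2\lfloor 6/4\rfloor=2$, not $2\lceil 6/4\rceil=4$). You should carry out the mod-$4$ case analysis explicitly and flag this discrepancy rather than leaving the reader to assume the computation lands on the stated expression.
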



A number of open questions on villainy and weak villainy of graphs were raised in \cite{WH} as follows:
 
 \begin{problem}
Characterize the graphs $G$ such that $B(G)=2$. What is the least $k$ such that it is NP-complete to determine if $B(G)=k$?
 \end{problem}
 
 \begin{question}
 Is it the case that the villainy of a cycle with $2k+1$ vertices is $k$ when $k \geq 2$? (Note: it was proved in \cite{C} that $B_w(C_{2k+1})=k$ when $k\geq 2$.)
 \end{question}
 
  \begin{question}
 What is the villainy and weak villainy of complete multipartite graphs?
 \end{question}
 
  \begin{question}
  What are the largest possible values of $B(G)$ and $B_w(G)$ when $G$ has $n$ vertices and $\chi(G)=k$?
 \end{question}

\begin{question}
Let $G+H$ denote the disjoint union of graphs $G$ and $H$. Clark et. al \cite{C} proved that $B_w(G+H)\geq B_w(G)+B_w(H)$ when $\chi(G)=\chi(H)$. Does the inequality always hold? When does equality hold?
\end{question}

Here we address Problem 1 by classifying the graphs with
 villainy  $2$:

\begin{theorem}\label{main}
If $G$ is a graph with $B(G)=2$, then $G$ is one of the following graphs:
\begin{enumerate}
\item $G=K_{1,t}$, for some $t\geq 2$
\item $G$ is a 6-vertex connected bipartite graph with parts of size 3
\item $G=P_4$ or $G=P_4+K_1$ or $G=P_4+K_2$
\item $G=C_4$ or $G=C_4+K_1$ or $G=C_4+K_2$
\item $G=C_5$
\item $G=K_2+K_2$ or $G=K_2+K_2+K_2$
\item $G=K_3+K_2$
\item $G=K_3+K_3$
\item $G=K_2+K_2+K_1$
\item $n(G)\geq 4$ and $G$ has a vertex $v$ of degree 0 or 1 such that $G-v$ is a complete graph
\item $G=P_3+rK_1$, where $r\geq 1$
\item $G=K_2+rK_1$, where $r\geq 1$
\end{enumerate}
\end{theorem}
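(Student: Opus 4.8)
The plan is to prove the theorem as a chain of reductions that progressively constrains the structure of $G$, using the known characterizations of $B(G)=0$ and $B_w(G)=1$ from \cite{C} and Theorem~\ref{bipartite} as base cases. First I would record the easy direction implicitly (each listed graph does have villainy $2$, which is a finite check plus the bipartite formula) and then concentrate on the hard direction: assuming $B(G)=2$, show $G$ appears on the list. The first reduction is to bound the chromatic number. I claim $\chi(G)\le 3$: if $\chi(G)=k$ is large, then in any optimal coloring the color classes are ``rigid enough'' that a suitable permutation $c$ of a proper coloring forces many recolorings — more precisely, one can choose a transposition of two colors realized on classes that are not ``interchangeable'' and exhibit a $c$ with $B(c)\ge 3$. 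I would make this quantitative by a counting argument on the sizes of the color classes and the bipartite-like subgraph they induce, reducing to the case $\chi(G)\in\{2,3\}$.

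Next, split on $\chi(G)$. If $\chi(G)=2$, then every component is bipartite, and I would use Theorem~\ref{bipartite} componentwise together with the superadditivity-type behavior of $B$ under disjoint unions (and the fact that isolated vertices can absorb color-count discrepancies) to pin down exactly which bipartite graphs and which disjoint unions keep the villainy at $2$ rather than pushing it to $0$, $1$, or $\ge 4$. This is where items (1)–(4), (6), (9), (11), (12) come from: stars $K_{1,t}$, the $6$-vertex balanced bipartite graphs, $P_4$ and $C_4$ (alone or with one or two extra edges' worth of isolated structure $K_1, K_2$), unions of copies of $K_2$, and $P_3$ or $K_2$ plus isolated vertices. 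The bookkeeping here is routine but must be exhaustive: for each candidate I verify $B=2$ via Theorem~\ref{bipartite} or a direct permutation argument, and for everything just outside the list I exhibit either a permutation forcing $B(c)\ge 3$ or show $B(G)\le 1$ by the \cite{C} classification.

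If $\chi(G)=3$, then $G$ has an odd cycle, and I would argue that $G$ must be ``small'' or ``almost complete.'' The key sub-step: if $G$ contains $K_3$ as a subgraph and has more than a bounded number of vertices, then either $G-v$ is complete for a vertex $v$ of degree $\le 1$ (item (10), handled by showing that appending a near-isolated vertex to $K_n$ yields villainy exactly $2$ because only the one ``misplaced'' color plus its partner need fixing), or $G$ contains enough extra structure to drive $B(c)\ge 3$ for a cleverly chosen $c$. The remaining $3$-chromatic graphs with no such near-complete structure must then have few vertices, and a finite case analysis produces $C_5$ (item (5)), $K_3+K_2$, $K_3+K_3$ (items (7), (8)), and the $P_4/C_4$ plus $K_2$ cases insofar as they are $3$-chromatic after adding the $K_2$ — here I would lean on the disjoint-union lower bound $B(G+H)\ge B(G)+B(H)$ when $\chi(G)=\chi(H)$ to kill unions of three or more nontrivial pieces.

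The main obstacle I anticipate is the $\chi(G)=3$ analysis when $G$ is connected and triangle-containing but not covered by item (10): bounding $|V(G)|$ there requires a genuinely new permutation argument showing that two ``independent'' local obstructions each contribute a forced recoloring, and making ``independent'' precise (so the contributions add to at least $3$ rather than overlapping) is the delicate point. A secondary nuisance is the disjoint-union bookkeeping: one must be careful that isolated vertices and single edges interact with color-count constraints in exactly the way Theorem~\ref{bipartite} predicts, since an extra $K_1$ can either preserve villainy $2$ (by providing slack for the color counts) or, in the presence of a second near-trivial component, push it to $3$. I would organize this with a lemma isolating precisely when adding $K_1$ or $K_2$ to a graph leaves $B$ unchanged.
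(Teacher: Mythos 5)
Your first reduction --- the claim that $B(G)=2$ forces $\chi(G)\le 3$ --- is false, and this is a genuine gap rather than a presentational issue. Item (10) of the theorem contains graphs of arbitrarily large chromatic number: $K_n$ plus a pendant or isolated vertex has $\chi=n$ and villainy exactly $2$ (the one duplicated color can always be repaired by a single swap with the low-degree vertex). So no permutation argument can show $B(c)\ge 3$ for all graphs with large $\chi$; the reduction must carry the exception ``unless $G$ is complete or has a vertex of degree at most $1$ whose deletion leaves a complete graph,'' and proving that this is the \emph{only} exception is the hardest part of the whole theorem. The paper's Lemma~\ref{chrom4} does this by first disposing of the case where the optimal coloring has a unique repeated color (which forces the near-complete structure), and then, when at least two classes are repeated or some class has size $\ge 3$, invoking Catlin's theorem (Theorem~\ref{subd}) to find an oddly subdivided $K_4$ and the Gr\"otzsch bound (Theorem~\ref{grot}) to exclude triangle-free configurations. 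Your sketch contains no substitute for these tools, and the ``counting argument on the sizes of the color classes'' you gesture at cannot work as stated because the exceptional family defeats it. Relatedly, you later file item (10) under the $\chi(G)=3$ case, where $G-v$ complete would force $G-v=K_3$; the general item (10) graphs simply do not appear anywhere in your case decomposition.

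Two secondary problems: you propose to use $B(G+H)\ge B(G)+B(H)$ for disjoint unions, but the paper only cites this inequality for the \emph{weak} villainy $B_w$ (and even its general validity is posed as an open question); it is not available for $B$, and in any case it gives nothing when the components are complete graphs with $B=0$, which is exactly the situation ($K_2+K_2+K_2$, $K_3+K_3$) you would need it for. Also, $P_4+K_2$ and $C_4+K_2$ are bipartite, so they belong to your $\chi=2$ analysis, not the $3$-chromatic one. The overall architecture (split on $\chi$, use Theorem~\ref{bipartite} for connected bipartite $G$, finite case analysis for $\chi=3$) does match the paper, but the proposal as written does not close the $\chi\ge 4$ case, which is where the real work lies.
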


\section{Tools}

In this section, we list additional definitions and some known results on proper coloring of graphs, which we will use during the proof of Lemmas in Section 3.

We denote the order of a graph $G$ by $n(G)$. We also denote the disjoint union of graphs $G$ and $H$ by $G+H$. We represent a complete graph, a cycle, and a path of order $n$ with $K_n$, $P_n$, and $C_n$, respectively. $K_{m,n}$ is a complete bipartite graph with parts of size $m$ and $n$, respectively.

We say we have a \textit{subdivision} of an edge $uv$ in a graph $G$ if we introduce a new vertex $w$ in $G$ and replace the edge $uv$ by edges $uw$ and $wv$.
A \textit{subdivision} of a graph $G$ is a graph obtained from $G$ by repeated edge subdivisions. An oddly subdivision of $K_4$  is a subdivision of $K_4$ (the complete graph with four
vertices) such that all four cycles corresponding to triangles in $K_4$ are odd. Catlin \cite{CA} proved the following:

\begin{theorem}(\cite{CA})\label{subd}
Every graph with no oddly subdivision of $K_4$ is 3-colorable.
\end{theorem}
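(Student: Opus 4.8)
The plan is to prove the contrapositive: if $\chi(G)\ge 4$, then $G$ contains an oddly subdivided $K_4$. Since an oddly subdivided $K_4$ sitting inside a subgraph of $G$ is also one inside $G$, I may pass to a vertex-minimal non-$3$-colorable subgraph and assume that $G$ is $4$-critical, meaning every proper subgraph is $3$-colorable while $G$ itself is not. I would then record the structural facts about $4$-critical graphs that drive everything: $\delta(G)\ge 3$ (a vertex of degree at most $2$ could be colored last), $G$ is $2$-connected (a cut vertex would let one recombine $3$-colorings of the blocks), and $G$ is non-bipartite (a bipartite graph is $2$-colorable). Finally, because $\delta(G)\ge 3$, $G$ contains a subdivision of $K_4$: the graphs with no $K_4$-subdivision are exactly the series-parallel graphs, and those always have a vertex of degree at most $2$.

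Fix such a subdivision $H\subseteq G$ with branch vertices $a,b,c,d$ and internally disjoint connecting paths $P_{ab},P_{ac},P_{ad},P_{bc},P_{bd},P_{cd}$, and record the length-parity $x_e\in\{0,1\}$ of each path. The four triangles $abc,abd,acd,bcd$ are odd exactly when the four $\mathrm{GF}(2)$ equations $x_{ab}+x_{bc}+x_{ca}=1$, $x_{ab}+x_{bd}+x_{da}=1$, $x_{ac}+x_{cd}+x_{da}=1$, $x_{bc}+x_{cd}+x_{db}=1$ all hold, so the goal is to massage $H$, keeping it a $K_4$-subdivision of $G$, until its parity vector equals the all-ones target. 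The basic move is to reroute one path $P_e$ along an alternative internally disjoint path between the same two branch vertices; this changes $x_e$ precisely when the new path has the opposite parity, and since each edge of $K_4$ lies in exactly two of the four triangular faces, it toggles exactly those two triangle-sums. Here the linear algebra is favorable: the six flip vectors produced this way are precisely the six weight-two vectors of $\mathrm{GF}(2)^4$, which span the entire even-weight subspace; and because the number of odd triangles is always even (the four triangle-sums total $2\sum_e x_e\equiv 0$), the difference between the current parity vector and the all-ones vector already lies in that subspace. Thus, \emph{provided each needed reroute is realizable}, a suitable combination of flips reaches the all-odd configuration and produces the oddly subdivided $K_4$.

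I expect the realizability of these reroutes to be the main obstacle, and the place where the criticality hypotheses must be used in full. For a prescribed edge $e$ I need an alternative path of the prescribed parity whose interior avoids the rest of $H$, so that the modified object is still a genuine $K_4$-subdivision; such a path need not exist for an arbitrary $H$, and a reroute that flips one triangle must not silently spoil another. To force existence I would choose $H$ extremally (for instance minimizing total length, or minimizing the number of even triangles) and then argue, using $2$-connectivity to supply a second internally disjoint path and using non-bipartiteness to supply an odd detour that changes parity, that a counterexample to realizability yields either a small separation or an even substructure contradicting the extremal choice or $4$-criticality. Handling the cases where the only available detours attach to the interiors of the paths (effectively sliding the branch vertices) is the delicate part, and is precisely the structural analysis that Catlin carries out to close the argument.
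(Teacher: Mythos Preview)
The paper does not prove this theorem at all; it is quoted as a tool with a citation to Catlin~\cite{CA} and no argument is given. So there is no ``paper's own proof'' to compare your attempt against.

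On the merits of your outline: the reduction to a $4$-critical subgraph, the existence of a $K_4$-subdivision via $\delta(G)\ge 3$, and the $\mathrm{GF}(2)$ bookkeeping on triangle parities are all correct and set up the problem cleanly. But you have not actually proved the theorem, and you say so yourself: the entire content of the result lies in showing that the needed parity flips are \emph{realizable} by reroutes inside $G$ that keep the six paths internally disjoint. Your linear algebra only tells you which subset of the six weight-two vectors you would like to apply; it says nothing about whether, for a given edge $e$ of the abstract $K_4$, an alternative path of the opposite parity (avoiding the interiors of the other five paths) exists in $G$. In a general $4$-critical graph such a reroute need not exist for a fixed $H$, and the extremal choice you gesture at (``minimize total length'' or ``minimize the number of even triangles'') does not by itself produce one; $2$-connectivity gives a second path between two vertices, but not one of a prescribed parity disjoint from four other prescribed paths. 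Your final paragraph concedes this and hands the work back to Catlin, so what you have written is a plan for a proof with the hard step missing, not a proof.
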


We also use the following result during the proof of Lemma \ref{chrom4}:

\begin{theorem}(\cite{CH})\label{grot}
The smallest triangle-free graph with chromatic number 4 has 11 vertices. 
\end{theorem}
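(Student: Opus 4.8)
The plan is to prove the two halves of the statement separately: that some triangle-free graph on $11$ vertices has chromatic number $4$, and that no triangle-free graph on at most $10$ vertices does. For the first half I would exhibit the Gr\"otzsch graph, i.e.\ the Mycielskian of $C_5$: start from a $5$-cycle $v_1\cdots v_5$, add shadow vertices $u_1,\dots,u_5$ with $u_i$ joined to the two $C_5$-neighbors of $v_i$, and add an apex $w$ joined to all $u_i$. This has $11$ vertices; it is triangle-free because the construction creates no triangle from a triangle-free base, and $\chi=4$ because the Mycielskian raises the chromatic number by exactly one over $\chi(C_5)=3$. I would verify both facts directly from the adjacencies rather than quoting the general Mycielski theorem.

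For the harder half, suppose for contradiction that some triangle-free graph on at most $10$ vertices has chromatic number $4$, and pass to a minimal such subgraph $G$; then $G$ is triangle-free and $4$-critical, so $\delta(G)\ge 3$, and $n:=n(G)\le 10$. The two engines are: (i) for \emph{every} independent set $S$ of $G$, the graph $G-S$ is non-bipartite, since otherwise two colors on $G-S$ plus one color on $S$ would $3$-color $G$; as $G$ is triangle-free, $G-S$ must contain a $5$-cycle, giving $n-|S|\ge 5$ and hence $n\ge\alpha(G)+5$; and (ii) the Ramsey numbers $R(3,3)=6$ and $R(3,4)=9$, which force $\alpha(G)\ge 3$ when $n\ge 6$ and $\alpha(G)\ge 4$ when $n\ge 9$. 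Combining these eliminates almost everything: $n\le 5$ is impossible as it would require $\alpha(G)\le 0$; $n\in\{6,7\}$ is impossible because $\alpha(G)\ge 3$ clashes with $\alpha(G)\le n-5\le 2$; and for $n=8$, for $n=9$, and for $n=10$ with $\alpha(G)=5$, one is forced to have $G-S$ equal to a single $C_5$, whereupon each vertex of the maximum independent set $S$ would need $\ge 3$ neighbors inside that $C_5$ forming an independent set there---impossible since $\alpha(C_5)=2$.

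The one genuinely delicate case, and the main obstacle, is $n=10$ with $\alpha(G)=4$. Here $T:=G-S$ has $6$ vertices, is triangle-free and non-bipartite, so it is a $C_5$ on $t_1,\dots,t_5$ together with a sixth vertex $t_6$ whose cycle-neighbors form an independent set of $C_5$ (so $\deg_T(t_6)\le 2$). Since $\alpha(C_5)=2$, every independent $3$-set of $T$ must contain $t_6$; because each $s\in S$ has $\ge 3$ neighbors in $T$ forming an independent set, every $s$ is adjacent to $t_6$. Maximality of $S$ forces every cycle vertex to have a neighbor in $S$, and as the $C_5$-neighbors of each $s$ must be non-adjacent to $t_6$, a short argument shows $t_6$ can have no cycle-neighbor at all; thus each $s$ is joined to $t_6$ and to exactly one non-edge pair $\{v_{j-1},v_{j+1}\}$ of $C_5$. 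These pairs are precisely the Mycielski shadow-adjacencies, so $G$ is isomorphic to the Gr\"otzsch graph with one shadow vertex deleted (or, if two vertices of $S$ share a pair, to a twin-reduction of it). I would finish by checking directly that every such graph is properly $3$-colorable---color the $C_5$ with three colors and then color $t_6$ and the four shadows greedily, exploiting that $t_6$ touches only $S$---contradicting $\chi(G)=4$. This kills the last case, so the minimum order is exactly $11$. The crux is thus the rigidity argument in this final case that pins $G$ down to essentially the Gr\"otzsch graph minus a vertex; every other case follows mechanically from the deletion lemma (i) and the two Ramsey numbers in (ii).
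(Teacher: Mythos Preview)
The paper does not prove this theorem at all; it is quoted from Chv\'atal~\cite{CH} and used as a black box inside the proof of Lemma~\ref{chrom4}. There is therefore no ``paper's own proof'' to compare against---your proposal supplies a proof where the paper simply cites one.

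For what it is worth, your argument is essentially sound. Two small points. First, the line ``$G-S$ must contain a $5$-cycle'' is, in general, only ``$G-S$ contains an odd cycle of length $\ge 5$''; the stronger statement is what you actually need later, and it does hold in every case you use it because there $|V(G-S)|\le 6$. Second, in the delicate case $n=10$, $\alpha(G)=4$, once you have shown $t_6$ has no cycle-neighbour and each $s\in S$ is joined to $t_6$ together with one non-edge pair of the $C_5$, the cleanest way to finish is: collapsing any twin vertices in $S$, the resulting graph is a subgraph of the Gr\"otzsch graph with at least one shadow vertex removed, hence $3$-colourable because the Gr\"otzsch graph is $4$-critical; re-expanding twins does not raise the chromatic number, contradicting $\chi(G)=4$. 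Spelling out the ``short argument'' that $t_6$ has no cycle-neighbour (any cycle-neighbour of $t_6$ would then have no neighbour in $S$, contradicting maximality of $S$) would also tighten the write-up. With those clarifications, your proof is complete and goes well beyond what the paper itself does.
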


The graph described in Theorem \ref{grot} is known as Gr\"otzsch graph.

\section{Properties of graphs with villainy 2}

In this section, we present a sequence of lemmas  which will be our tool to prove Theorem \ref{main} in Section 3.

\begin{lemma}\label{triangle}
Let $G$ be a graph containing a triangle $abc$. If $G$ has a proper coloring using $\chi(G)$ colors in such a way that at least one color class has size at least 3, then $B(G)\geq 4$.
\end{lemma}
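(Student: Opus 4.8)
The plan is to produce, from the given proper $\chi(G)$-coloring $f$ that has a color class of size at least $3$, a permutation $c$ of $f$ whose triangle $abc$ is monochromatic, and then to show that repairing $c$ while keeping all color counts fixed costs at least four recolorings; this yields $B(c)\ge 4$, hence $B(G)\ge 4$.

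First I would normalize: since $B$ is unaffected by renaming colors, relabel so that $1$ is a color used by $f$ at least three times. As $abc$ is a triangle, $f(a),f(b),f(c)$ are pairwise distinct, so at most one of $a,b,c$ is colored $1$ by $f$; write $t\in\{2,3\}$ for the number of them that are not. I would then define $c$ as follows: recolor those $t$ triangle vertices to color $1$, and recolor $t$ vertices of $f^{-1}(1)\setminus\{a,b,c\}$ — there are at least $t$ such vertices because $|f^{-1}(1)|\ge 3$ and only $3-t$ triangle vertices were colored $1$ — bijectively to the $t$ colors just vacated on the triangle, leaving $f$ unchanged elsewhere. By construction $c$ uses every color exactly as often as $f$ does and has $c(a)=c(b)=c(c)=1$.

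The key step is the counting argument bounding $B(c)$. Let $c'$ be any proper coloring with the same color counts as $c$ and let $D$ be the set of vertices where they differ. Properness of $c'$ forces three distinct colors on the triangle, so at most one of $a,b,c$ gets color $1$ under $c'$; since all three have color $1$ under $c$, at least two of them leave color class $1$, so $L=\{v:c(v)=1\ne c'(v)\}$ has $|L|\ge 2$. Equality of the size of color class $1$ in $c$ and $c'$ forces $J=\{v:c'(v)=1\ne c(v)\}$ to satisfy $|J|=|L|\ge 2$. Since $L$ and $J$ are disjoint and $L\cup J\subseteq D$, we get $|D|\ge 4$, so $B(c)\ge 4$.

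The main obstacle — and the point of the lemma — is recognizing that a monochromatic triangle forces more than the obvious two corrections on the triangle: the constraint that color counts be preserved turns those two forced departures from color class $1$ into two further forced arrivals, necessarily located away from the triangle, for a total of four. The hypothesis that some color class has size at least $3$ is used exactly to guarantee enough spare vertices of color $1$ outside the triangle to set up $c$ in the first place.
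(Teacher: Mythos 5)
Your proposal is correct and follows essentially the same route as the paper: permute the given coloring so that the triangle is monochromatic in the size-$\ge 3$ color, observe that at least two triangle vertices must leave that color class, and that preserving the multiplicity of that color forces an equal number of vertices elsewhere to enter it, giving four recolorings in total. Your write-up just makes explicit two points the paper leaves implicit, namely the existence of such a permutation and the disjointness of the ``leaving'' and ``arriving'' sets.
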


\begin{proof}
Let $c$ be a proper coloring  of $G$ that uses $\chi(G)$ colors in such a way that a color class  has size at least 3. Let $c^*$ be any permutation of $c$ over the vertices of $G$ in which the vertices of the triangle receive the same color, say 1. To reconstruct a proper coloring of $G$, we need to change the color of at least two vertices of the triangle. Moreover, since the new colors of these two vertices of the triangle are different from 1, in order to keep the number of times each color appears, we need to change the color of at least two vertices outside the triangle as well. Therefore $B(G)\geq 4$. 
\end{proof}

\begin{lemma}\label{color-class-size-4}
Let $G$ be a graph with $\chi(G)\geq 3$. If $G$ has a proper coloring using $\chi(G)$ colors in such a way that at least one color class has size at least 4, then $B(G)\geq 4$.
\end{lemma}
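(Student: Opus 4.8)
The plan is to mimic the structure of the proof of Lemma~\ref{triangle}, but since we no longer have a triangle to force two recolorings, I would instead exploit the hypothesis $\chi(G)\ge 3$ together with a color class of size at least $4$. Let $f$ be the given proper coloring of $G$ with $\chi(G)$ colors, and suppose color class $C_1$ has $|C_1|\ge 4$. Since $\chi(G)\ge 3$, there are at least two other color classes, say $C_2$ and $C_3$, each nonempty. The idea is to build a permutation $c$ of $f$ that is badly improper and whose repair is expensive.

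First I would pick a vertex $u\in C_2$ and a vertex $v\in C_3$. Form $c$ from $f$ by swapping the colors on some vertices: concretely, take two vertices $x_1,x_2\in C_1$ and recolor them with colors $2$ and $3$ respectively, while recoloring $u$ and $v$ with color $1$ — this is a permutation of $f$ since we have only permuted colors $1,2,3$ on the four vertices $x_1,x_2,u,v$. Now $c$ may fail to be proper at several places, but the key point is to arrange the choice of $x_1,x_2,u,v$ (using the freedom that $|C_1|\ge 4$) so that any proper repair that preserves the color counts must change at least four vertices. The cleanest way to see a lower bound of $4$: in any proper coloring $c'$ obtained from $c$, consider how many vertices of $C_1$ (as an $f$-class) still have color $1$ versus a different color; a counting argument comparing $c$ and $c'$ on the classes $C_1,C_2,C_3$ should show at least two vertices were changed ``out of'' color $1$ or ``into'' it beyond what is forced, and then the count-preservation condition doubles this.

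Actually, the slicker route — and the one I expect the authors take — is to reduce to Lemma~\ref{triangle} or argue directly that a size-$\ge 4$ class together with $\chi\ge 3$ always yields, via an appropriate permutation, a situation where two independent ``defects'' must each be fixed by a recoloring, and each such recoloring off color $1$ must be compensated by a recoloring onto color $1$, giving $B(G)\ge 4$. So the steps are: (1) fix $f$ with $|C_1|\ge 4$ and pick witnesses in $C_2,C_3$; (2) construct the permutation $c$ concentrating ``excess'' color $1$ on a set of vertices that cannot all keep color $1$ in any proper coloring; (3) show any proper $c'$ agreeing with $c$ in color counts differs from $c$ in $\ge 2$ vertices currently colored $1$; (4) invoke count-preservation to conclude $\ge 2$ further changes, hence $B(G)\ge 4$.

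The main obstacle is step (2)–(3): making sure the constructed permutation genuinely forces two distinct, non-interchangeable recolorings rather than one. Without a triangle, a single recoloring might simultaneously fix several conflicts, so the construction must separate the conflicts — for instance by placing the ``wrong'' color-$1$ vertices so that they have neighbors of color $1$ in two independent spots, or by using the size-$4$ bound to guarantee enough room that the conflicts cannot be resolved by moving a single vertex. Handling the case analysis for how $C_1$ interacts with the rest of the graph (whether $G[C_1\cup C_2]$ has edges forcing conflicts, etc.) is where the care lies; everything after establishing ``two forced changes off color $1$'' is the same count-preservation doubling used in Lemma~\ref{triangle}.
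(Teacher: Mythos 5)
There is a genuine gap: your proposal never produces a concrete permutation that provably forces four recolorings, and you acknowledge as much when you say the ``main obstacle'' is making sure the conflicts cannot be repaired by a single change. The one explicit construction you do give (recolor $x_1,x_2\in C_1$ with colors $2,3$ and recolor $u\in C_2$, $v\in C_3$ with color $1$) can fail outright: nothing guarantees this creates any improper edge at all, let alone two independent defects, so the resulting coloring could even be proper. The counting sketch in your step (3) is likewise not an argument --- without a specific structural witness there is nothing to count against.

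The missing idea, which is the entire content of the paper's proof, is the following dichotomy. If $G$ contains a triangle, the size-$4$ class in particular has size at least $3$ and Lemma~\ref{triangle} already gives $B(G)\geq 4$. If $G$ is triangle-free, then $\chi(G)\geq 3$ forces an odd cycle of length at least $5$, hence a path $uvwz$ on four vertices. Now the hypothesis $|C_1|\geq 4$ is used exactly once: permute the coloring so that all four vertices $u,v,w,z$ receive color $1$. Since the largest independent set of $P_4$ has two vertices, any proper recoloring must change at least two of $u,v,w,z$ away from color $1$, and preserving the multiplicity of color $1$ then forces at least two further changes outside the path, giving $B(G)\geq 4$. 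This is the ``separation of conflicts'' you were looking for --- the path structure guarantees two edge-disjoint monochromatic edges sharing no common cover vertex beyond size $2$ --- and without identifying it (or some equivalent subgraph) the proof does not go through.
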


\begin{proof}
Let $c$ be a proper coloring  of $G$ that uses $\chi(G)$ colors in such a way that a color class  has size at least 4. If $G$ has a triangle, then Lemma \ref{triangle} implies $B(G)\geq 4$, as desired. Hence, suppose $G$ has no triangle. Since moreover $\chi(G)\geq 3$, the graph $G$ contains an odd cycle of length at least 5. Therefore $G$ contains a path $uvwz$.  Let $c^*$ be any permutation of $c$ over the vertices of $G$ in which the vertices of the path $uvwz$ receive the same color. To restore a proper coloring of $G$, we need to change the color of at least two vertices of the path. Moreover,  in order to keep the number of times each color appears, we need to change the color of at least two vertices outside the path as well. Therefore $B(G)\geq 4$. 
\end{proof}

\begin{lemma}\label{diamond}
Let $G$ be a graph having a triangle one of whose vertices have degree at least 3 in $G$. If $G$ has a proper coloring using $\chi(G)$ colors in such a way that at least two color classes have size at least 2, then $B(G)\geq 3$.
\end{lemma}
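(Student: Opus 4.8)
The plan is to exhibit a bad permutation $c^*$ of $c$ that forces at least three recolorings. Let $abc$ be the triangle with, say, $\deg_G(a)\ge 3$, so $a$ has a neighbor $d\notin\{b,c\}$. Let $c$ be a proper $\chi(G)$-coloring in which two color classes, say the classes of colors $1$ and $2$, each have size at least $2$. I would build $c^*$ by permuting colors so that all three of $a,b,c$ receive color $1$: since class $1$ and class $2$ each have at least two vertices, there is enough ``room'' to move colors around (at worst swapping the roles of $1$ and $2$, or cycling $1,2$ with the original colors of $a,b,c$) so that in $c^*$ the multiset of colors is the same as in $c$ and the triangle is monochromatic in color $1$. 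If in addition the neighbor $d$ of $a$ carries color $1$ in this $c^*$, I keep it; otherwise I first adjust so that $d$ also gets color $1$ — the point is that I want a configuration where fixing the triangle cheaply is obstructed by $d$.

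Next I would argue the lower bound. Any recoloring $c'$ of $c^*$ that is proper must change the color of at least two of $a,b,c$ (a monochromatic triangle needs two of its vertices recolored). Two sub-cases: if $d$ also has color $1$ in $c^*$, then after making the triangle proper, the vertex $a$ (or whichever of $a,b,c$ keeps color $1$, which must be one of them unless we recolor all three) is adjacent to $d$ with color $1$, so we must recolor $d$ too, giving at least three changes among $\{a,b,c,d\}$ — and if instead we recolor all three of $a,b,c$, that is already three. If $d$ does not have color $1$, then I instead use the ``balancing'' argument familiar from Lemmas \ref{triangle} and \ref{color-class-size-4}: the two vertices of the triangle that get recolored receive colors $\neq 1$, so to preserve color-class sizes at least one vertex outside the triangle must also change, for a total of at least three. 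Combining the cases gives $B(c^*)\ge 3$, hence $B(G)\ge 3$.

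The main obstacle I expect is the bookkeeping in the first step: verifying that one can \emph{always} realize a permutation $c^*$ of $c$ that simultaneously makes $\{a,b,c\}$ monochromatic, keeps $d$ from rescuing the triangle cheaply, and preserves all color-class sizes — especially when $a,b,c$ originally use three distinct colors and $\chi(G)$ is small, so there may be few colors to permute. The hypothesis that two classes have size $\ge 2$ is exactly what should make this possible, but one has to check the cases according to how many of the colors on $\{a,b,c,d\}$ coincide and whether any of them is one of the two large classes; this case analysis, rather than any deep idea, is where the work lies.
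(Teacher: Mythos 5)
There is a genuine gap at the very first step of your construction. A permutation $c^*$ of $c$ has \emph{exactly} the same color multiset as $c$, so to make the triangle $\{a,b,c\}$ monochromatic in color $1$ you need color $1$ to have multiplicity at least $3$ in $c$ --- and at least $4$ if you also want $d$ to carry color $1$. The hypothesis only guarantees \emph{two} color classes of size at least $2$; in the critical case (class sizes $2,2,1,\dots,1$, e.g.\ a $5$-vertex graph with $\chi=3$) no class has size $3$, and no permutation of $c$ can make any triangle monochromatic. The ``enough room to move colors around'' claim is therefore false, and both branches of your subsequent case analysis rest on a coloring $c^*$ that you cannot construct. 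Note also that the situation where some class does have size at least $3$ is already handled by Lemma~\ref{triangle} (which gives the stronger bound $B(G)\geq 4$); Lemma~\ref{diamond} exists precisely to cover the remaining case where only classes of size $2$ are available, so an argument that needs a monochromatic triangle cannot be the right one here.

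The repair is to spend the two size-$\geq 2$ classes on two \emph{disjoint monochromatic edges} instead of one monochromatic triangle. With triangle $uvw$ and $z$ a neighbor of $v$ outside $\{u,w\}$, take a permutation $c^*$ with $c^*(u)=c^*(w)=1$ and $c^*(v)=c^*(z)=2$; this needs only multiplicity $2$ for each of colors $1$ and $2$. Any proper recoloring must change at least one of $u,w$ and at least one of $v,z$. Moreover, a properly colored triangle uses three distinct colors, so some vertex of $\{u,v,w\}$ must end up with a color outside $\{1,2\}$; since every vertex of $\{u,v,w,z\}$ carries color $1$ or $2$ in $c^*$, preserving that third color's multiplicity forces a recoloring of some vertex outside $\{u,v,w,z\}$. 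These three forced recolorings lie in pairwise disjoint sets, giving $B(G)\geq 3$.
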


\begin{proof}
Suppose $uvw$ is a triangle in $G$ and suppose $z$ is a vertex in $G$ with $vz\in E(G)$. Let $c$ be a proper coloring  of $G$ that uses $\chi(G)$ colors in $\{1,\ldots,\chi(G)\}$ in such a way that each of the colors 1 and 2  appears at least twice in $c$.   Let $c^*$ be any permutation of $c$ over the vertices of $G$ in which $c^*(u)=c^*(w)=1$ and $c^*(v)=c^*(z)=2$.  To reconstruct a proper coloring of $G$, we need to change the color of at least one vertex in $\{u,w\}$, and we need to change the color of at least one vertex in $\{v,z\}$. Moreover, a color different from 1 and 2 has to be assigned to a vertex in $\{u,v,w,z\}$.  As a result,  in order to keep the number of times each color appears, we need to change the color of at least one vertex outside $\{u,v,w,z\}$. Therefore $B(G)\geq 3$. 
\end{proof}

\begin{lemma}\label{matching}
Let $G$ be a graph having a matching of size at least 3. If $G$ has a proper coloring using $\chi(G)$ colors in such a way that at least three color classes have size at least 2, then $B(G)\geq 3$.
\end{lemma}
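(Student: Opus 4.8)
The plan is to mirror the structure of the proofs of Lemmas~\ref{diamond} and \ref{triangle}: exhibit a single bad permutation $c^*$ of a given good coloring $c$, and argue that any proper recoloring that preserves color-class sizes must change at least three vertices. Suppose $M=\{x_1y_1,\,x_2y_2,\,x_3y_3\}$ is a matching of size $3$ in $G$, and let $c$ be a proper coloring with $\chi(G)$ colors in which three color classes, say those of colors $1$, $2$, and $3$, each have size at least $2$. The natural choice is to define $c^*$ from $c$ by permuting colors so that $c^*(x_i)=c^*(y_i)=i$ for $i=1,2,3$; this is possible precisely because each of colors $1,2,3$ occurs at least twice in $c$, so we can move a color-$i$ class to cover $\{x_i,y_i\}$ (the six endpoints are distinct since $M$ is a matching, and $c^*$ agrees with $c$ up to a permutation of colors).

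The next step is the lower bound argument. In $c^*$ each of the three monochromatic edges $x_iy_i$ is improperly colored, so any proper recoloring $c'$ must recolor at least one endpoint of each $x_iy_i$; since these three pairs are vertex-disjoint, that already forces at least three recolored vertices. This immediately gives $B(G)\ge 3$, and unlike in Lemma~\ref{diamond} no separate ``balancing'' vertex is needed because the three required changes are automatically disjoint. I would then note that $B(c^*)\le B(G)$ by definition of villainy, completing the proof.

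The one subtlety to get right — and the step I expect to be the only real obstacle — is verifying that $c^*$ is genuinely a permutation of $c$, i.e.\ that the color-class sizes of $c^*$ equal those of $c$. Moving a size-$\ge 2$ class of color $i$ onto exactly the two vertices $\{x_i,y_i\}$ is only a \emph{permutation} of colors if we are careful about overlaps: the three target pairs are disjoint, but a vertex $x_1$ might originally carry color $2$, etc. The clean way to handle this is the standard observation (implicit in the earlier lemmas) that for \emph{any} assignment of the available colors to vertices that respects the multiset of colors used by $c$, there is a permutation of $\{1,\dots,\chi(G)\}$ realizing it; since colors $1,2,3$ each appear $\ge 2$ times, we can first route two copies of color $i$ to $\{x_i,y_i\}$ and then extend arbitrarily to the remaining vertices, which is always achievable because we are only rearranging a fixed multiset. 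Once this bookkeeping is stated, the rest is immediate. I would therefore spend one or two sentences making this explicit and then close with the disjointness count above.
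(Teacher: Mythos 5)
Your proof is correct and follows essentially the same route as the paper: permute the coloring so that each of the three disjoint matching edges becomes monochromatic (possible since three color classes have size at least $2$), then observe that each such edge forces at least one recoloring and the edges are vertex-disjoint. Your extra bookkeeping about realizing this assignment as a genuine rearrangement of the color multiset is a detail the paper leaves implicit, but it does not change the argument.
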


\begin{proof}
Let $c$ be a proper coloring  of $G$ that uses $\chi(G)$ colors in such a way that at least three color classes  have size at least 2. Let $c^*$ be any permutation of $c$ over the vertices of $G$ in which the endpoints of three disjoint edges in $G$ receive the same color. To reconstruct a proper coloring of $G$, we need to change the color of at least one endpoint of each of these three edges.  Therefore $B(G)\geq 3$. 
\end{proof}

\begin{lemma}\label{chi(G)=3-n(G)7+}
Let $G$ be a graph of order at least 7 with $\chi(G)=3$. We have $B(G)\geq 3$. 
\end{lemma}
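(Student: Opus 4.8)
The plan is to reduce, using Lemmas~\ref{triangle} and \ref{color-class-size-4}, to a situation where Lemma~\ref{matching} applies. First I would observe that if $G$ has a proper $3$-coloring in which some color class has size at least $4$, then Lemma~\ref{color-class-size-4} already gives $B(G)\geq 4$, so I may assume that every proper $3$-coloring of $G$ has all classes of size at most $3$; since the classes partition $n(G)\geq 7$ vertices, this forces $n(G)\in\{7,8,9\}$. Next, if $G$ contains a triangle then, because $n(G)\geq 7$, every proper $3$-coloring has a class of size at least $\lceil 7/3\rceil = 3$, so Lemma~\ref{triangle} gives $B(G)\geq 4$. Hence from now on I assume $G$ is triangle-free, $n(G)\in\{7,8,9\}$, and no proper $3$-coloring of $G$ has a class of size $4$ or more.

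The second step is to produce a matching of size $3$ in $G$. Since $\chi(G)=3$, $G$ is non-bipartite, so it has a shortest odd cycle $C$, whose length is at least $5$ by triangle-freeness. If that length is at least $7$, then $C$ itself contains a matching of size $3$. Otherwise $C$ is a (chordless) $C_5$ on a vertex set $W$, and I would argue as follows: if some vertex of $V(G)\setminus W$ has a neighbor in $W$, or if there is any edge inside $V(G)\setminus W$, then that edge extends to a matching of size $3$ using two disjoint edges of the $C_5$; and if neither happens, every vertex outside $W$ is isolated, so $G=C_5+rK_1$ with $r=n(G)-5\geq 2$, in which case coloring the $C_5$ with class sizes $2,2,1$ and placing all $r$ isolated vertices into one of the size-$2$ classes gives a class of size $r+2\geq 4$, contradicting the first step. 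So $G$ has a matching of size $3$.

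The third step is to find a proper $3$-coloring of $G$ in which all three color classes have size at least $2$; Lemma~\ref{matching} then yields $B(G)\geq 3$ and we are done. For $n(G)\in\{8,9\}$ this is automatic, as any proper $3$-coloring then has class sizes $(3,3,2)$ or $(3,3,3)$. The delicate case is $n(G)=7$, where the class sizes are $(3,3,1)$ or $(3,2,2)$, and I would show a $(3,2,2)$-coloring exists: take a $(3,3,1)$-coloring with classes $A,B$ of size $3$ and a singleton class $\{x\}$; if $x$ has a non-neighbor $a\in A$, then recoloring $a$ with the color of $x$ keeps the coloring proper (the neighbors of $a$ all lie in $B$) and produces class sizes $(2,3,2)$, and symmetrically for $B$; so if no such balanced coloring existed, $x$ would be adjacent to all of $V(G)\setminus\{x\}$, and triangle-freeness would make $N(x)$ independent, i.e.\ $G=K_{1,6}$ with $\chi(G)=2$, a contradiction.

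I expect the main obstacle to be the case analysis in the middle two steps rather than any single hard inequality: one must check that the only triangle-free non-bipartite graphs on $7$ to $9$ vertices without a matching of size $3$ are the graphs $C_5+rK_1$ (which are then disposed of by Lemma~\ref{color-class-size-4}), and that a balanced $(3,2,2)$-coloring is always available when $n(G)=7$. Once these are in hand, the statement follows from Lemma~\ref{matching}.
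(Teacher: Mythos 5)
Your proof is correct, and its overall skeleton is the same as the paper's: eliminate colorings with a class of size at least $4$ via Lemma~\ref{color-class-size-4}, eliminate triangles via Lemma~\ref{triangle} (using the pigeonhole class of size $3$), produce a matching of size $3$, and finish with Lemma~\ref{matching}. You deviate from the paper in two local places, and both deviations are sound. First, the paper disposes of any $G$ containing a $C_5$ outright by a bespoke argument (a permutation placing colors $1,1,1,2,2$ on the $C_5$ forces at least $4$ recolorings), and only then concludes that the remaining graphs have odd girth at least $7$ and hence a size-$3$ matching; you instead keep the $C_5$ case alive and extract a size-$3$ matching directly from a shortest (hence induced) odd cycle plus one extra edge, with the degenerate case $C_5+rK_1$ killed by Lemma~\ref{color-class-size-4}. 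Your version avoids the ad hoc villainy computation at the cost of a small extra case split; the paper's version gets the stronger bound $B(G)\geq 4$ for $C_5$-containing graphs. Second, in the endgame where some color class is a singleton (forcing $n=7$), the paper identifies $G=C_7$ and recolors it, whereas you give a general rebalancing argument turning a $(3,3,1)$-coloring into a $(3,2,2)$-coloring by moving a non-neighbor of the singleton vertex; your argument is slightly more robust since it does not rely on pinning down the graph exactly. Both routes are valid and of comparable length.
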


\begin{proof}
Let $c$ be a proper coloring  of $G$ that uses 3 colors. If $c$ has a color class of size at least 4, then by Lemma \ref{color-class-size-4} we have $B(G)\geq 4$, as desired. Therefore suppose $c$ has no color class of size at least 4. Since $G$ has at least 7 vertices and $\chi(G)=3$, at least one color class in $G$ has size at least 3. Therefore by Lemma \ref{triangle} we may suppose that $G$ has no triangle.  

If $G$ contains $C_5$, then we have $B(G)\geq 4$, because any permutation of $c$ over the vertices of $G$ in which the vertices of the $C_5$ receive colors 1,1,1,2,2, respectively, requires at least 4 recolorings  to reconstruct a proper coloring of $G$. Hence, suppose $G$ has no $C_5$. 

Since $\chi(G)=3$ and $G$ has no triangle and no $C_5$, it must contain an odd cycle of length at least 7. Hence, $G$ contains a matching of size at least 3. If each color class of $c$ has size at least 2, then by Lemma \ref{matching} we have $B(G)\geq 3$. Therefore suppose $G$ has a color class of size 1, which implies $G$ has exactly 7 vertices. Since moreover $G$ has no triangle and no $C_5$, we have $G=C_7$. Therefore $G$ has a proper coloring using 3 colors in such a way that each color class has size at least 2, which implies $B(G)\geq 3$, according to Lemma \ref{matching}.
\end{proof}

\begin{lemma}\label{two triangles}
Let $G$ be a graph of order 6 with $\chi(G)=3$. We have $B(G)\geq 3$ unless $G$ has two components each of which is a triangle.
\end{lemma}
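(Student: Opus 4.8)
The plan is to fix an arbitrary proper $3$-coloring of $G$, note that its color classes have sizes $(4,1,1)$, $(3,2,1)$, or $(2,2,2)$, and then split into cases according to these sizes and to whether $G$ contains a triangle, in each case either invoking one of Lemmas~\ref{triangle}, \ref{color-class-size-4}, \ref{diamond}, or arguing by hand. As a first reduction, if some proper $3$-coloring of $G$ has a color class of size at least $4$, then Lemma~\ref{color-class-size-4} gives $B(G)\ge 4$; so from now on assume every proper $3$-coloring of $G$ realizes $(3,2,1)$ or $(2,2,2)$.

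Suppose first that $G$ contains a triangle $T$. If some proper $3$-coloring realizes $(3,2,1)$, it has a color class of size $3$, so Lemma~\ref{triangle} gives $B(G)\ge 4$. Otherwise every proper $3$-coloring realizes $(2,2,2)$, hence has at least two color classes of size $\ge 2$, so if some vertex of $T$ has degree $\ge 3$ in $G$ then Lemma~\ref{diamond} gives $B(G)\ge 3$. In the remaining subcase every vertex of every triangle of $G$ has degree exactly $2$, so each triangle is a component isomorphic to $K_3$; writing $G=K_3+G'$ with $|V(G')|=3$, I would observe that if $G'\ne K_3$ then $G'$ has a nonedge, and coloring its two endpoints with color $1$ (and extending properly over the rest of $G'$) together with the colors $1,2,3$ on the $K_3$ yields a proper $3$-coloring of $G$ with a color class of size $\ge 3$ — of size $4$ if $G'$ is edgeless — contradicting one of the two reductions already made. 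Thus in the triangle case $B(G)\ge 3$ unless $G=K_3+K_3$.

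Now suppose $G$ is triangle-free. Since $\chi(G)=3$, $G$ is non-bipartite, so it contains an odd cycle, which must be a $C_5$ because $G$ is triangle-free and has only six vertices; write it as $v_1v_2v_3v_4v_5$. A chord of this $C_5$ would create a triangle, so there is none, and the sixth vertex $w$ has at most two neighbors (three of them among $v_1,\dots,v_5$ would include an edge, forming a triangle); after relabeling the cycle we may assume $N(w)\subseteq\{v_1,v_3\}$. I would then take the proper $3$-coloring with $v_2=v_4=w=1$, $v_1=v_3=2$, $v_5=3$ (which is of type $(3,2,1)$), and permute its colors to the coloring $c^*$ given by $(v_1,v_2,v_3,v_4,v_5,w)=(1,1,1,2,2,3)$. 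The only monochromatic edges of $c^*$ are $v_1v_2$, $v_2v_3$, and $v_4v_5$, so any proper recoloring of $c^*$ must change a set of vertices covering the path $v_1v_2v_3$ together with the edge $v_4v_5$, which forces at least two recolorings; I would finish by checking that each of the two minimal covers $\{v_2,v_4\}$ and $\{v_2,v_5\}$ fails, since once the colors forced on the recolored vertices are filled in (using $w\not\sim v_2,v_4,v_5$) the resulting multiset of color counts is $(2,2,2)$, or has a class of size $3$ on the wrong color, never $(3,2,1)$. Hence $B(c^*)\ge 3$ and $B(G)\ge 3$.

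I expect this last paragraph to be the main obstacle. The analogous $C_5$ argument in the proof of Lemma~\ref{chi(G)=3-n(G)7+} had at least two vertices outside the cycle in which to absorb the change of color counts caused by a two-vertex recoloring of the $C_5$, and therefore immediately forced $B(G)\ge 4$; with only six vertices there is a single spare vertex, so one must instead rule out a two-recoloring repair directly, and the bookkeeping becomes uniform over the (few) possible triangle-free graphs only after relabeling the $C_5$ so that $w$ avoids $v_2,v_4,v_5$.
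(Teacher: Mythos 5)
Your proof is correct, and while it draws on the same toolkit as the paper (Lemmas \ref{color-class-size-4}, \ref{triangle}, and \ref{diamond}, plus a $C_5$ permutation argument), it is organized differently and is actually more rigorous at the decisive step. The paper first cases on the color-class sizes $(3,2,1)$ versus $(2,2,2)$ and uses Lemma \ref{matching} together with a ``delete one vertex to make $G$ bipartite'' reduction to funnel everything into either the two-triangles configuration or the triangle-free $C_5$ case; you instead case on triangle versus triangle-free first, which lets you dispatch the triangle case with a short forcing argument ($G=K_3+G'$, and any nonedge of $G'$ would produce a forbidden color class of size at least $3$) and avoids Lemma \ref{matching} entirely. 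More importantly, in the triangle-free case the paper simply asserts that the permutation putting $1,1,1,2,2$ on the $C_5$ ``requires at least 4 recolorings,'' which is an overclaim: for $G=C_5+K_1$ the recoloring $v_2\colon 1\to 2$, $v_4\colon 2\to 3$, $w\colon 3\to 1$ repairs that permutation with only $3$ changes while preserving all multiplicities. Your explicit enumeration of the only two possible $2$-vertex covers $\{v_2,v_4\}$ and $\{v_2,v_5\}$ of the monochromatic edges, together with the bookkeeping showing neither can restore the counts $(3,2,1)$, is exactly what is needed to obtain the (correct) bound $B(G)\geq 3$ there; so your write-up closes a genuine gap in the published argument rather than merely rephrasing it.
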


\begin{proof}
Let $c$ be a proper coloring  of $G$ that uses 3 colors. If $c$ has a color class of size at least 4, then by Lemma \ref{color-class-size-4} we have $B(G)\geq 4$, as desired. Therefore suppose $c$ has no color class of size at least 4.  As a result, either the color classes of $c$ have sizes $3,2,1$ or the color classes of $c$ have sizes $2,2,2$. 

First suppose the color classes of $c$ have sizes 3,2,1, where color 1 appears with multiplicity 3, color 2 appears with multiplicity 2, and color 3 appears with multiplicity 1. By Lemma \ref{triangle} we may suppose that $G$ has no triangle. Since $\chi(G)=3$ and $G$ has 6 vertices, the graph $G$ must contain a cycle of length 5. Any permutation of $c$ over the vertices of $G$ in which the vertices of the $C_5$ receive colors 1,1,1,2,2, respectively, requires at least 4 recolorings  to reconstruct a proper coloring of $G$. Hence in this case, we have $B(G)\geq 4$, as desired. 

Now suppose the color classes of $c$ have sizes 2,2,2. By Lemma \ref{matching} we may suppose that $G$ has no matching of size at least 3. If $G$ has a vertex whose removal makes the graph bipartite, then $G$ has a proper coloring with 3 colors in such a way that the multiplicities of colors are 3,2,1. In this case the above argument can be applied to show $B(G)\geq 3$. Hence, we may suppose that removing any vertex from $G$ does not result in a bipartite graph.  Therefore, $G$ contains at least two odd cycles.

Since $\chi(G)=3$, $G$ contains a triangle or a cycle of length 5. First suppose $G$ has a cycle of length 5. If the vertex out of the cycle $C_5$ is an isolated vertex, then the above case can be applied. If the vertex out of the cycle $C_5$ has an edge to a vertex in $C_5$, then $G$ has a matching of size 3, which implies $B(G)\geq 3$ by Lemma \ref{matching}. Hence, suppose $G$ has no $C_5$. Therefore $G$ contains two triangles. 

By Lemma \ref{diamond} we may suppose that the two triangles of $G$ are disjoint and $G$ has no additional edges besides the edges of these triangles, as desired.
\end{proof}

\begin{lemma}\label{n=5}
Let $G$ be a graph of order at most 5 with $\chi(G)=3$. We have $B(G)\geq 3$ unless $G\in \{C_3, C_5, C_3+K_2, C_3+K_1 \}$ or $G$ is a triangle along with a pendant edge.    
\end{lemma}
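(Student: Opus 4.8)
The plan is to split on $n(G)\in\{3,4,5\}$ (note $\chi(G)=3$ forces $n(G)\ge 3$) and, within each value of $n(G)$, to peel off the listed exceptional graphs while showing that every remaining graph has $B(G)\ge 3$, mostly by quoting Lemmas \ref{triangle} and \ref{diamond}.

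For $n(G)=3$ there is nothing to do: $\chi(G)=3$ forces $G=K_3=C_3$, an exception. For $n(G)=4$ I would first observe that a triangle-free graph on four vertices has no odd cycle (a triangle is forbidden and $C_5$ needs five vertices), hence is bipartite with $\chi\le 2$; so $G$ contains a triangle $abc$, and the fourth vertex $z$ has $\deg_G(z)\in\{0,1,2\}$, since degree $3$ would give $G=K_4$ with $\chi=4$. These three possibilities are $C_3+K_1$, a triangle with a pendant edge, and $K_4-e$ (the diamond); the first two are on the exception list, so the only work here is the diamond.

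For $n(G)=5$ the first step is: if $G$ is triangle-free then, being non-bipartite, it contains an odd cycle of length $\ge 5$, necessarily a spanning $C_5$, and since any chord of a $5$-cycle creates a triangle, $G=C_5$ is an exception. So I may assume $G$ has a triangle. Next, if some proper $3$-coloring of $G$ has a color class of size $\ge 3$, then Lemma \ref{triangle} gives $B(G)\ge 4$. Otherwise every proper $3$-coloring has color class sizes $(2,2,1)$ (three nonempty classes summing to $5$, none exceeding $2$), so in particular every such coloring exhibits two classes of size $\ge 2$. If some triangle of $G$ has a vertex of degree $\ge 3$, Lemma \ref{diamond} then yields $B(G)\ge 3$. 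In the remaining subcase every vertex of every triangle has degree exactly $2$ with both its neighbors in that triangle, so each triangle of $G$ is a $K_3$ component; the two leftover vertices induce $K_2$ or $2K_1$, giving $G=C_3+K_2$ (an exception) or $G=C_3+2K_1$ — but the latter has a proper $3$-coloring with a color class of size $3$ (place both isolated vertices on a color already used in the triangle), contradicting the standing assumption of this subcase. Hence $G=C_3+K_2$ and the case analysis is complete.

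I expect the one genuinely ad hoc point to be the diamond $G=K_4-e$, since none of the available lemmas applies to it: it has no color class of size $\ge 3$, no matching of size $3$, and only one color class of size $\ge 2$. For this graph I would argue by hand, writing $c,d$ for its two non-adjacent (degree-$2$) vertices: every proper $3$-coloring has class sizes $(2,1,1)$ with the doubled color forced onto $\{c,d\}$ and the colors of multiplicity $1$ placed in some order on the two degree-$3$ vertices. Consequently the permutation $c^*$ that puts the doubled color on both degree-$3$ vertices and colors $1,2$ on $c,d$ disagrees with every proper $3$-coloring carrying $c^*$'s color multiset on all four vertices, so $B(G)\ge 4$. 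Everything else should then follow mechanically from the quoted lemmas together with the elementary fact that a triangle-free graph on at most five vertices with $\chi\ge 3$ is exactly $C_5$.
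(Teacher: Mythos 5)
Your proposal is correct and follows essentially the same route as the paper's own proof: case analysis on $n(G)\in\{3,4,5\}$, an explicit by-hand argument showing the diamond $K_4-e$ has villainy $4$ (the unique color pattern forces all four vertices to be recolored), and for $n=5$ the reduction via Lemma \ref{triangle} to colorings with multiplicities $2,2,1$ followed by Lemma \ref{diamond} to force the triangle to be a $K_3$ component, leaving only $C_3+K_2$. Your treatment of $C_3+2K_1$ is in fact slightly more explicit than the paper's, which disposes of it with the remark that Lemma \ref{triangle} applies.
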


\begin{proof}
Let $c$ be a proper coloring  of $G$ that uses 3 colors. Since $\chi(G)=3$, the graph $G$ has at least 3 vertices. Moreover, if $G$ has exactly three vertices, we have $G=K_3$. Note that $B(K_3)=0$. Therefore we may suppose that $G$ has 4 or 5 vertices.

First suppose that the order of $G$ is 4. In this case, the multiplicities of the colors in $c$ are 2,1,1. Hence, $G$ is either a triangle along with an isolated vertex or a pendant edge, or $G$ is a diamond.

If $G$ is a diamond, then we have $B(G)=4$, because if we permute the coloring of $c$ on $V(G)$ in such a way that the two vertices of degree 3 get the same color, then we will have to recolor all the vertices in order to reinstate a proper coloring of $G$. 

Now suppose $G$ is a triangle along with an isolated vertex or a pendant edge. In any permutation of $c$, there are at most one pair of adjacent vertices having the same color. In this case, switching the colors of two vertices reinstates a proper coloring of $G$. Therefore in this case, we have $B(G)=2.$ 

Now suppose $G$ has order 5. Since $\chi(G)=3$, $G$ contains a triangle or a $C_5$. If $G$ contains $C_5$ but not $C_3$, then $G=C_5$. We already argued in earlier proofs that $B(C_5)=2$. Hence, suppose $G$ contains a triangle.

By Lemma \ref{triangle} we can suppose that $G$ has a proper 3-coloring where the multiplicities of the colors are $2,2,1$, respectively. By Lemma \ref{diamond} none of the vertices of the triangle in $G$ can have a neighbor outside the triangle. 

Therefore we have $G=K_3+K_2$, since otherwise one of the cases in Lemma \ref{triangle} or \ref{diamond} applies to $G$. But for $G=K_3+K_2$,  we have $B(G)=2$, because in any permutation of $c$ on the vertices of $G$ there is at most one pair in $K_3$ and one pair in $K_2$ having the same colors, and switching the color of one vertex from each of the pairs reinstates a proper coloring of $G$. Therefore in this case $B(G)=2$. 
\end{proof}

\begin{lemma}\label{chrom4}
Let $G$ be a graph with $\chi(G)\geq 4$. We have $B(G)\geq 3$, unless $G$ is a complete graph or $G$ has a vertex of degree at most 1 whose removal from $G$ results in a complete graph.
\end{lemma}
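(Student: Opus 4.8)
The plan is to fix an optimal proper colouring $c$ of $G$ with $k=\chi(G)\ge 4$ colours and, in every case other than the stated ones, to exhibit a permutation of $c$ that cannot be made proper by fewer than three recolourings; the earlier lemmas are the main tool. First I would clear the easy reductions: if some optimal colouring of $G$ has a colour class of size at least $4$ then $B(G)\ge 4$ by Lemma~\ref{color-class-size-4}, and if $G$ contains a triangle and some optimal colouring has a class of size at least $3$ then $B(G)\ge 4$ by Lemma~\ref{triangle}. Hence from now on every optimal colouring of $G$ has all classes of size at most $3$, and all of size at most $2$ whenever $G$ contains a triangle. The remaining argument splits into three cases: $G$ is triangle-free; $G$ has a triangle some vertex of which has degree at least $3$; or $G$ has a triangle but each triangle is a $K_3$-component.

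For triangle-free $G$ I would first check that $G$ has a matching of size $3$: if the maximum matching had size at most $2$, then deleting its at most four endpoints leaves an independent set, while the deleted vertices induce a triangle-free and hence bipartite graph, and these two facts yield a proper $3$-colouring of $G$, contradicting $\chi(G)\ge 4$. Next I would show that some optimal colouring has at least three classes of size $\ge 2$: otherwise at least $k-2$ classes are singletons, so $n(G)\le k+4$, whence $\alpha(G)\le n(G)-\chi(G)+1\le 5$; since $G$ is triangle-free this gives $\Delta(G)\le\alpha(G)\le 5$ and therefore $\chi(G)\le\Delta(G)+1\le 6$, so $n(G)\le\chi(G)+4\le 10$, contradicting Theorem~\ref{grot} (and $\chi(G)\ge 7$ contradicts $\chi(G)\le 6$). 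Lemma~\ref{matching} now gives $B(G)\ge 3$, and nothing needs to be excluded here because every graph on the exceptional list contains a triangle.

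If $G$ has a triangle but no triangle contains a vertex of degree $\ge 3$, then each triangle is a $K_3$-component, so $G=tK_3+H$ with $t\ge 1$ and $H$ triangle-free with $\chi(H)=k\ge 4$; running the triangle-free argument on $H$ (whose colour classes are of size at most $2$) again produces three classes of size $\ge 2$ and a matching of size $3$, and Lemma~\ref{matching} gives $B(G)\ge 3$; such $G$ is not an exception since it has a $K_3$-component. If $G$ has a triangle with a vertex of degree $\ge 3$ and some optimal colouring has two classes of size $\ge 2$, Lemma~\ref{diamond} gives $B(G)\ge 3$. The delicate case is that $G$ has a triangle with a vertex of degree $\ge 3$ but every optimal colouring has at most one class of size $\ge 2$; then $n(G)\le k+1$. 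If $n(G)=k$ then $G=K_k$. If $n(G)=k+1$, the $k-1$ singleton-coloured vertices must induce a clique $C\cong K_{k-1}$ (otherwise two of them could be recoloured alike, giving a $(k-1)$-colouring), and $\chi(G)=k$ forces one of the two remaining vertices, say $x$, to be adjacent to all of $C$, so $C\cup\{x\}=K_k$; if the other vertex $y$ has degree at most $1$ then $G=K_k+K_1$ or $G$ is $K_k$ with a pendant edge, the listed exceptions, and otherwise ($y$ has degree at least $2$, and one checks $G$ is not an exception) I would permute an optimal colouring so that its repeated colour falls on two adjacent vertices of $C$ each adjacent to $y$ and verify that this permutation is at Hamming distance at least four from every proper colouring of $G$ with the prescribed colour multiplicities, giving $B(G)\ge 3$.

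I expect the main obstacle to be exactly this near-complete case $n(G)=k+1$: none of Lemmas~\ref{triangle}--\ref{matching} applies (with so few vertices no optimal colouring can have two classes of size $\ge 2$, and the clique $C$ is too rigid to feed Lemma~\ref{diamond}), so the bound has to come from an explicit permutation argument, and one must pin down precisely which of these graphs lie in the exceptional families and show every other one forces at least three recolourings. A secondary technical point is making the counting in the triangle-free case work for all $\chi\ge 4$, which is why the bounds $\Delta\le\alpha$ and $\chi\le\alpha+1$ for triangle-free graphs are needed alongside Theorem~\ref{grot}.
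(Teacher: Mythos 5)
Your proposal is correct and reaches the same endpoints as the paper's proof, but the middle of the argument is genuinely different. Both proofs open with the same reductions (Lemmas~\ref{color-class-size-4} and~\ref{triangle} to cap class sizes), both close the triangle-free case with Theorem~\ref{grot}, and both dispose of the near-complete case $n=\chi(G)+1$ by the same explicit permutation (forcing the doubled color onto two clique-neighbors of the low-degree vertex $y$, so that any proper recoloring with the right multiplicities must move the doubled color onto $y$ and a non-neighbor, costing four changes); the paper simply runs that case first and you run it last. The divergence is in how the matching of size $3$ and the three large color classes are produced. The paper invokes Catlin's theorem (Theorem~\ref{subd}) to find an oddly subdivided $K_4$, splits on whether $G\supseteq K_4$, extracts the matching from the subdivision, and forces $\chi(G)=4$ in the triangle-free case via the observation that the singleton classes induce a clique. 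You avoid Catlin entirely: the matching comes from the elementary fact that a triangle-free graph with maximum matching at most $2$ is $3$-colorable, the three large classes come from the chain $n\le\chi+4$, $\alpha\le 5$, $\Delta\le\alpha$, $\chi\le\Delta+1\le 6$, $n\le 10<11$, and the triangle-containing graphs are split by whether some triangle has a vertex of degree at least $3$ (feeding Lemma~\ref{diamond}) or every triangle is a $K_3$-component (reducing to the triangle-free analysis on the remainder $H$, where the same counting gives a contradiction since $n(G)\ge 3+11$ while $n(G)\le\chi+2$). Your route is more elementary in that it needs no subdivision machinery, at the cost of a slightly longer case analysis; the paper's route is shorter once Catlin's theorem is granted. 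One cosmetic point: in the final case the two neighbors of $y$ need not both lie in $C$ (one may be $x$), but since $G-y$ is complete any two neighbors of $y$ are adjacent, so the argument is unaffected.
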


\begin{proof}
If $G$ is a complete graph, then $B(G)=0$. If $G$ is not a complete graph, but has a vertex $v$ of  degree at most 1 such that $G-v$ is a complete graph, then $B(G)=2$, because in any permutation of a proper coloring of $G$ that uses $\chi(G)$ colors, switching the color of one pair of vertices suffices to guarantee $v$ and a vertex in $V(G)-N(v)$ both get the same color. Hence, suppose $G$ is none of these two cases. We prove that $B(G)\geq 3$. 

Let $c$ be a proper coloring  of $G$ that uses $\chi(G)$ colors. First suppose $c$ has a color class of size 2 and the rest of color classes have size 1. Suppose $u$ and $v$ are the vertices with $c(u)=c(v)$, and we may suppose this color is 1. Since all other colors appear once in $c$, the graph $G-\{u,v\}$ is a complete graph. If each of $u$ and $v$ has a non-neighbor in $V(G)-\{u,v\}$, then we can give them colors in $\{2,\ldots,\chi(G)\}$ and obtain a proper coloring of $G$ using $\chi(G)-1$ colors, which is not accepted. Hence, we may suppose that $u$ is adjacent to all vertices in $V(G)-\{u,v\}$. Therefore $G-v$ is a complete graph, and by the choice of $G$, we have $d(v)\geq 2$.

 Suppose $w,w'$ are two neighbors of $v$ in $G$. Let $c^*$ be a permutation of $c$ in which the vertices $w$ and $w'$ have color 1. In order to reconstruct a proper coloring of $G$, the vertex $v$ has to receive color 1, since 1 is the only color with multiplicity 2 in $c$. Therefore we need to recolor vertices $w$ and $w'$ in $c^*$, and since both of $w$ and $w'$ have the same color in $c^*$, we need to recolor two more vertices in $G$ (including vertex $v$.) Therefore in this case, we have $B(G)\geq 4$. Hence, we may suppose that $\chi(G)\leq n-2$.

Since $\chi(G)\leq n-2$, $c$ has either a color class of size at least 3 or it has at least two color classes each of size 2.  Since $\chi(G)\geq 4$, by Theorem \ref{subd} $G$ has an oddly subdivision of $K_4$.

 First suppose $G$ contains $K_4$ as a subgraph. If $c$ has a color class of size at least 3, consider a permutation of $c$ in which three vertices of $K_4$ receive the same color. If $c$ has two color classes of size 2, consider a permutation of $c$ in which two vertices of $K_4$ receive the same color and the remaining two vertices also receive the same color. In  both of the cases, at least 4 recolorings are required to reconstruct a proper coloring of $G$. Therefore when $G$ has $K_4$ as a subgraph, we have $B(G)\geq 4$. Hence, suppose $G$ has no $K_4$ as a subgraph. 

By Theorem \ref{subd} the graph $G$ has an oddly subdivision of $K_4$, and since $G$ contains no $K_4$ as a subgraph, the graph $G$ contains a matching of size 3. If $c$ has at least three color classes each of size at least 2, then Lemma \ref{matching} implies $B(G)\geq 3$, as desired. Hence suppose $c$ contains at most two color classes of size at least 2. By Lemma \ref{color-class-size-4} we may suppose that all color classes in $c$ have size at most 3. 

Therefore $G$ contains a complete subgraph with $\chi(G)-2$ vertices, because $\chi(G)-2$ color classes in $c$ have size 1. Hence, if $G$ is triangle-free, then we must have $\chi(G)=4$.  By Theorem \ref{grot} the smallest triangle-free graph with $\chi(G)\geq 4$ has at least 11 vertices.  Any 11-vertex graph with a proper coloring with 4 colors has a color class of size at least 4, or has at least three color classes each of size at least 2, which are not accepted.  Therefore $G$ cannot be triangle-free. 

Hence, $G$ contains a triangle $abc$. If $G$ has a color class of size at least 3, then by Lemma \ref{triangle} we have $B(G)\geq 4$, as desired. Hence, all color classes of $G$ have size at most 2, and exactly two color classes have size 2 in $c$. If $abc$ is not a component of $G$, then by Lemma \ref{diamond} we have $B(G)\geq 4$, as desired. Therefore $abc$ must be a component of $G$. Since $\chi(G)\geq 4$, the graph $G$ must have other components as well, and as a result, $G$ has at least three color classes of size at least 2, which is not accepted. 
\end{proof}

\section{Proof of Theorem \ref{main}}

Let $G$ be a graph with $B(G)=2$. If $\chi(G)\geq 4$, then by Lemma \ref{chrom4} and the fact that complete graphs have villainy  0, we have that $G$ has a vertex $v$ of degree 0 or 1 such that $G-v$ is a complete graph.

Now suppose $\chi(G)=3$. By Lemma \ref{chi(G)=3-n(G)7+} we have $n\leq 6$. If $n=6$, then by Lemma \ref{two triangles} we have $G=K_3+K_3$. If $n=5$, then by Lemma \ref{n=5} we have $G=C_5$ or $G=K_3+K_2$.

If $n=4$, then by Lemma \ref{n=5}  we have that $G$ has a vertex $v$ of degree 0 or 1 such that $G-v$ is a triangle. 

Note that we cannot have $n\leq 3$, because triangle has villainy  0.

Note that $\chi(G)\neq 1$, as empty graphs have villainy  0. Hence the final case is when $\chi(G)=2$. 

If $G$ is connected, then by Theorem \ref{bipartite} the graph $G$ is a star of order at least 3, $G=C_4$, $G=P_4$, or $G$ is a bipartite graph with parts each of size 3. 

Hence, suppose $G$ is bipartite and disconnected. We consider two cases.

\textit{Case 1.} $G$ has at least two components of order at least 2.  If $G$ has a proper 2-coloring  where a color class has size at least 4, then  we can  rearrange that proper coloing in such a way  that the endpoints of two disjoint edges in $G$ all get the same color. As a result, we need to recolor at least 4 vertices in order to regain a proper coloring of $G$ keeping the color multiplicties, which implieas $B(G)\geq 4$. Hence, each color class of $G$ has size at most 3 and therefore $n(G)\leq 6$.

If $n(G)=6$, then $G=C_4+ K_2$ or $G=P_4+ K_2$ or $G=K_2+ K_2+ K_2$, because otherwise $G$ can have a 2-coloring with a color class of size 4. Each of the graphs $C_4+K_2, P_4+K_2,$ and $K_2+K_2+K_2$ has villainy  2, because in any permutation of a proper 2-coloring of these graphs at most two edges exist that have endpoints with the same colors and on these cases switching the colors of two vertices will reproduce a proper coloring of the graph. 

Hence suppose $n(G)\leq 5$. Note that $G$ cannot be $K_2+P_3$, because if in a rearrangement of a proper 2-coloring of $G$ the two vertices of $K_2$ and the middle vertex of $P_3$ get the same color, then we need to recolor 4 vertices to regain a proper coloring of $G$ with the same color multiplicities.

Therefore  we have $G=K_2+K_2$  or $G=K_2+K_2+K_1$.  Both of these graphs, have villainy  2 because in any rearrangement of a proper coloring of these two graphs switching the colors of two vertices suffice to reinstate a proper coloring of $G$ keeping multiplicities of the colors.

\textit{Case 2.} $G$ has only one component of order at least 2. Let $G_1$ be the component of $G$ of order at least 2. Hence, the rest of components of $G$ are isolated vertices. Note that $B(G_1)\leq B(G)$, because $G_1$ is a subgraph of $G$. Moreover, $G_1$ is bipartite. Hence, $B(G_1)=0$ or $B(G_1)=2$ (the villany number of all graphs are even, because we aim to keep the multiplicities of the colors). 

If $B(G_1)=0$, then $G_1=K_2$. As a result, $G=K_2+rK_1$, for some $r\geq 1$. We note that such graph $G$ indeed has villainy  2.

If $B(G_1)=2$, then by our eariler argument, $G_1$ is a star of order at least 3, $G_1=C_4$, $G_1=P_4$, or $G_1$ is a bipartite graph with parts each of size 3. 

If $G_1$ is a bipartite graphs with parts each of size 3, then using the assumption that $G$ is disconnected, we can rearrange a proper coloring of $G$ in such a way that two vertices in the first part of $G_1$ and two vertices in the second part of $G_1$ all get the same color. This way, we need to recolor at least 4 vertices to obtain a proper coloring of $G$ in such a way that the multiplicities of the colors stay the same, which contradicts $B(G)=2$. So this case is not possible.

If $G_1=C_4$, then we must have $G=C_4+K_1$, because if $G=C_4+rK_1$ for $r\geq 2$, then we can have a proper coloring of $G$ with a color class of size at least 4. Then we can rearrange that coloring in such a way that all vertices of $C_4$ have the same color. Now we require recoloring at least 4 vertices to get a proper coloring of $G$ with the same multiplicities, which contradicts $B(G)=2$. Moreover, we have $B(C_4+K_1)=2$.

A simlar argument as above implies that when $G_1=P_4$, we have $G=P_4+K_1$.

If $G_1=K_{1,t}$ for some $t\geq 3$, then $G=K_{1,t}+rK_1$ with $r\geq 1$. Consider a proper 2-coloring of $G$ where color 1 has multiplicity 2 and color 2 has multiplicity $t+r-1$. Rearrange that coloring in such a way that  color 1 appears on two vertices of degree 1. Then we need to recolor at least 4 vertices to regain a proper coloring of $G$ with the same multiplicities. This is not accepted as $B(G)=2$.

Hence, we only need to check if $G_1=K_{1,2}$ ($=P_3$) is an acceptable option. In fact for this case we can have $G=P_3+rK_1$ for any $r$ with $r\geq 1$ because we would need to recolor at most two vertices of $G$ in any rearrangement of a proper coloring of $G$ in order to construct a proper coloring of $G$.

\end{document}